\newcommand{\newword}[1]{\textbf{\emph{#1}}}
\newcommand{\PP}{\mathbb{P}}
\newcommand{\QQ}{\mathbb{Q}}
\newcommand{\RR}{\mathbb{R}}
\newcommand{\ZZ}{\mathbb{Z}}
\theoremstyle{definition}
\newtheorem*{dfn}{Definition}
\theoremstyle{definition}
\newtheorem{Theorem}{Theorem}
\newtheorem{Lemma}{Lemma}
\newtheorem{Reduction}{Reduction}
\newtheorem{Question}{Question}
\title{On the gonality, treewidth, and orientable genus of a graph}
\author{James Stankewicz}
\date{\today}
\begin{document}
\maketitle

\begin{abstract} We examine connections between the gonality, treewidth,
and orientable genus of a graph. Especially, we find that hyperelliptic
graphs in the sense of Baker and Norine are planar. We give a notion
of a bielliptic graph and show that each of these must embed into a closed
orientable surface of genus one. We also find, for all $g\ge 0$, trigonal graphs
of treewidth 3 and orientable genus $g$, and give analogues for graphs of
higher gonality.
\end{abstract}

The \newword{gonality}
of a graph can refer to many related notions inspired by the
Brill-Noether theory of an algebraic curve. Baker and Norine  were the first to
define it as the least degree of a non-constant
harmonic morphism of graphs $G \to T$ where
$G$ is the graph of interest and $T$ is a tree.
Compare this to the definition
of the gonality of an algebraic curve $C$ being the least degree of a
nonconstant morphism from $C$ to $\PP^1$. Several other notions of gonality
have been defined by other authors, including Caporaso \cite{Caporaso} and
Cornelisson-Kato-Kool \cite{CKK}. The last notion, \newword{stable gonality},
is notable because it
allows refinements formed by subdividing edges and adding leaves.
This does not change the \newword{orientable genus} of $G$,
or the least genus of a closed orientable surface into which $G$ embeds. This
stable gonality
is also notable as it admits a spectral lower bound, i.e., in terms
of the spectrum of the Laplacian of $G$. This is
particularly appealing because there is a certain type of graph which arises
from algebraic curves called \newword{Shimura curves}, and calculations
suggest that only finitely many are planar, while nearly every other invariant
of these graphs is spectral. Could it be that there is a connection
between stable gonality and orientable genus?
In the following we say that a graph is $d$\newword{-gonal} if its
stable gonality is $d$. In the case $d=2$, this ends up being equivalent to the
notion of a \newword{hyperelliptic graph} due to Baker and Norine when the
(Euler) genus of $G$ is at least 2
\cite[\S 5]{BNHypell}. 

\begin{Theorem}All hyperelliptic graphs are planar, and if $d\ge 3$ with
$d\not \equiv 2 \bmod 4$ then there exist
$3$-connected $d$-gonal graphs of all orientable genera at least $(d/2 -1)^2$.
\end{Theorem}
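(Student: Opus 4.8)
The statement has two essentially independent halves. For the planarity claim I would argue through treewidth. If $G'$ is a refinement of $G$ (a subdivision with leaves added) carrying a finite harmonic morphism to a tree of degree $d$, then the preimage of a vertex is an effective divisor of degree $d$ and rank $\ge 1$ on $G'$, so $\mathrm{dgon}(G')\le d$; combined with $\mathrm{dgon}(G')\ge\mathrm{tw}(G')=\mathrm{tw}(G)$ (treewidth being unaffected by subdivisions and added leaves) this gives $\mathrm{sgon}(G)\ge\mathrm{tw}(G)$ for every connected $G$. Hence a hyperelliptic graph, being $2$-gonal, has treewidth at most $2$, so it has no $K_4$ minor, so it is a partial $2$-tree and in particular planar. (Under the Baker--Norine definition one needs no refinement, since such a graph already carries a degree-$2$ rank-$1$ divisor, giving ordinary divisorial gonality $\le 2$.) I would note in passing that the bound is sharp, $K_{2,n}$ being hyperelliptic and planar without being a tree.

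For the existence half the plan is to fix a base graph of minimal genus and then pump up the genus one handle at a time. I would take $G_0=K_{d,d}$. It is $d$-connected, hence $3$-connected; its treewidth is exactly $d$, since contracting $d-1$ edges of a perfect matching and retaining the two unmatched vertices gives a $K_{d+1}$ minor while the bags $A\cup\{b_i\}$ ($i=1,\dots,d$) over the two sides $A=\{a_1,\dots,a_d\}$, $B=\{b_1,\dots,b_d\}$ give width $d$; hence $\mathrm{sgon}(G_0)\ge d$. Conversely the map $G_0\to K_{1,d}$ that collapses $A$ to the centre of the star and sends $B$ bijectively onto the leaves is a finite harmonic morphism of degree $d$, so $\mathrm{sgon}(G_0)=d$. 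Finally, since $K_{d,d}$ is bipartite, Euler's formula forces orientable genus $\ge 1-V/2+E/4=(d-2)^2/4$, and Ringel's genus formula gives $\gamma(K_{d,d})=\lceil(d-2)^2/4\rceil=\lceil(d/2-1)^2\rceil$, so this lower bound is met.

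To realise every larger genus I would iterate a local \emph{handle gadget}: given an embedding of the current graph in its minimal surface, pick a face, choose three of its boundary vertices lying in a common bag of a width-$d$ tree decomposition, and splice in a small graph inside that face attached only to those three vertices. The design goals are (i) $3$-connectivity persists because the attaching set has three vertices; (ii) treewidth stays $d$ (a bounded piece of treewidth $\le d$ is glued along three vertices already in a bag, and one arranges the $K_{d+1}$ minor to survive), so $\mathrm{sgon}\ge d$; (iii) the harmonic morphism to the tree extends over the gadget with the same degree $d$, built so the gadget carries a compatible degree-$d$ harmonic map onto a subtree grafted onto $T$, so $\mathrm{sgon}$ stays $d$; and (iv) placing the gadget inside a face raises the orientable genus by exactly one. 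Iterating $g-\lceil(d/2-1)^2\rceil$ times then produces a $3$-connected $d$-gonal graph of orientable genus $g$. (For small $d$ one can shortcut this by instead running through the family $K_{d,n}$, whose genera $\lceil(d-2)(n-2)/4\rceil$ already exhaust every integer from $\gamma(K_{d,d})$ upward.)

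I expect the main obstacle to be item (iv) together with the exactness of the genus, since orientable genus is not additive under amalgamation in general; making it precise will need either a genus-additivity theorem tailored to the attachment used (an amalgamation over a triangle or a bounded vertex cut) or a direct rotation-system computation showing that the inserted handle raises the genus by exactly one while leaving the $K_{d+1}$ minor and the harmonic morphism undisturbed. I also expect the hypothesis $d\not\equiv 2\bmod 4$ to be spent here: it should be the congruence under which the genus increments of the gadget, and of the supplementary graphs $K_{d,n}$, mesh so that every integer at least $\lceil(d/2-1)^2\rceil$ occurs, while the excluded residue class forces this construction to leave gaps or to start higher — and I would need to identify exactly which step breaks in that case.
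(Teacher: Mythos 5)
Your planarity argument is the paper's second proof, essentially verbatim: stable gonality is bounded below by treewidth (de Bruyn--Gijswijt), so a hyperelliptic graph has treewidth at most $2$, hence is a subgraph of a series-parallel graph, hence planar. That is correct. Be aware, though, that the paper's \emph{primary} proof is a direct induction constructing a planar embedding symmetric under the hyperelliptic involution with each conjugate pair $\{a_i,b_i\}$ on a common face; that extra facial information is what the paper later uses to prove that bielliptic graphs are toroidal, so the treewidth route, while shorter, buys strictly less.

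For the existence half, the paper does exactly what you relegate to a parenthetical ``shortcut'': it takes $K_{d,n}$ for $n\ge d$, gets gonality exactly $d$ from the treewidth lower bound together with the degree-$d$ collapse onto a star, and invokes Ringel's formula $\gamma(K_{d,n})=\lceil(d-2)(n-2)/4\rceil$. Your handle-gadget construction is not in the paper and, as you concede at step (iv), is not a proof: genus additivity under a three-vertex amalgamation is precisely the point you would need to establish, and you do not. That is a genuine gap in your primary route. On the other hand, your instinct to restrict the $K_{d,n}$ family to ``small $d$'' is well founded and in fact exposes a defect in the paper's own argument: the increment of $\lceil(d-2)(n-2)/4\rceil$ under $n\mapsto n+1$ is roughly $(d-2)/4$, so for $d\ge 7$ the family skips genera (for instance $d=7$ gives $\gamma(K_{7,10})=10$ and $\gamma(K_{7,11})=12$, missing $11$). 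The hypothesis $d\not\equiv 2\bmod 4$ only rules out the cases $d=10,14,\dots$ where every attained value is a multiple of $(d-2)/4\ge 2$; it does not repair the skipping for $d\ge 7$. So the $K_{d,n}$ computation settles the claim only for $d\in\{3,4,5\}$, and for larger $d$ one genuinely needs a supplementary construction of the kind you sketch --- carried out in full, which neither you nor the paper does.
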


To make the above relation clear, recall that for a graph to be planar it is
equivalent to having orientable genus zero. Similarly, we say a graph is
\newword{toroidal} if its orientable genus is at most 1. This is not the end
of the story on the connection between gonality and orientable genus however,
as there is much more from the Brill-Noether theory of curves to be adapted to
the language of graphs. Consider for instance that an algebraic curve is called
bielliptic if it admits a degree 2 morphism to an algebraic curve of genus
one. Similarly, we let a graph $G$ be \newword{bielliptic} if it admits a
degree 2 harmonic morphism to a graph $G'$ of (Euler) genus one. We have the
following.

\begin{Theorem}\label{biellthm} All bielliptic graphs are toroidal.\end{Theorem}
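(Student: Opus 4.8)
The plan is to split a bielliptic graph $G$ into a ``cyclic core'' together with planar pieces, embed the core in the torus, and fit the planar pieces into its faces.

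\emph{Reductions and decomposition.} Subdividing edges and adding leaves changes neither the orientable genus nor the (Euler) genus, and carries degree-$2$ harmonic morphisms to degree-$2$ harmonic morphisms, so we may freely refine $\phi\colon G\to G'$. After a suitable refinement we may assume $\phi$ is the quotient by an involution $\iota$ of $G$ with $G/\iota\cong G'$ (a degree-$2$ harmonic morphism becomes Galois in this sense once the ``folded''/double edges in fibres are subdivided); then each edge of $G$ maps homeomorphically to an edge of $G'$, each vertex-fibre has one or two points, and the one-point fibres are the images of the $\iota$-fixed vertices. Since $G'$ has genus one, its $2$-core is a single cycle $Z'$, and $G'$ is $Z'$ with trees $T_1',\dots,T_k'$ attached at vertices $a_1',\dots,a_k'$ of $Z'$; after a further refinement the $a_j'$ may be taken distinct, each carrying one tree. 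Put $K=\phi^{-1}(Z')$ and $H_j=\phi^{-1}(T_j')$, so $G=K\cup H_1\cup\dots\cup H_k$, with $H_j\cap K$ the preimage of $a_j'$ (one or two vertices). Because the local degree of $\phi$ at a vertex of $Z'$ (resp.\ of $T_j'$) is computed from the edges lying over $Z'$ (resp.\ over $T_j'$) alone, $\phi|_K\colon K\to Z'$ and $\phi|_{H_j}\colon H_j\to T_j'$ are again harmonic of degree at most $2$.

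\emph{The planar pieces.} Each $T_j'$ is a tree, so $H_j$ is $2$-gonal or a tree, hence planar -- trivially if its genus is at most one, and by Theorem~1 otherwise. Moreover $H_j$ has a planar embedding with the preimage of $a_j'$ on a single face: this is automatic if $a_j'$ has one preimage, while if it has two preimages $a_j^{(1)},a_j^{(2)}$ one adjoins an edge $e$ between them, subdivides $e$, and lets $\iota$ fix the new midpoint, extending $\phi$ to a degree-$2$ harmonic morphism from $H_j\cup e$ (subdivided) to $T_j'$ with one extra leaf; then $H_j\cup e$ is planar by Theorem~1, and deleting $e$ from a planar embedding of it leaves a planar embedding of $H_j$ with $a_j^{(1)},a_j^{(2)}$ cofacial. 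Thus each $H_j$ embeds in a closed disc with its attaching vertices on the boundary circle.

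\emph{The core and the assembly.} Since $\phi|_K\colon K\to Z'$ is a degree-$\le 2$ harmonic morphism to a cycle, $K$ is, up to subdivision, one of: two disjoint cycles ($\iota$ swaps two components of $K$); a single cycle double-covering $Z'$ ($K$ connected, $\iota$ free); or, if $\iota$ fixes $r\ge1$ vertices of $K$, the ``doubled cycle'' -- the graph $C_r$ with every edge replaced by a bigon, the fixed vertices being the $q_i$ and each arc of $Z'$ between consecutive fixed images lifting to a bigon. Embed $K$ in the torus respectively as two parallel meridians, as one meridian, or as the evident planar doubled cycle, which has one bigon face for each bigon. Now route the $H_j$: a one-point-attached piece drops into any face at its attaching vertex, while a two-point-attached piece needs its two attaching vertices cofacial. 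In the doubled-cycle case both attaching vertices lie on the two edges of a common bigon, hence on that bigon face, and pieces sharing a bigon have nested (not interleaved) attaching pairs and so sit as nested sub-discs. In the two-meridian case the attaching vertices sit at the same angle on the two meridians, and the pieces become disjoint essential strips in one complementary annulus. The only genuinely torus-using point is the single-meridian case: there the two preimages of $a_j'$ are antipodal on $K$, and distinct trees give pairwise interleaved antipodal pairs -- this is exactly why such a $G$ need not be planar, since contracting three interleaved connecting pieces forces a $K_{3,3}$-minor -- but the complement of the meridian is an annulus whose two boundary circles are the two sides of $K$, and attaching one preimage of $a_j'$ to each side realizes $H_j$ (still inside a disc) as an \emph{essential} band across the annulus; essential arcs of an annulus are mutually parallel, so the bands for all the $H_j$ are drawn disjointly regardless of interleaving. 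Subdividing $K$ at the attaching points keeps the embedding cellular, and we conclude that $G$ embeds in the torus.

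The principal difficulty is this last step, namely the simultaneous disjoint routing of the two-point-attached pieces: a planar core curve genuinely fails once three or more trees interleave, and the whole role of the torus is that the complement of the core acquires the topology of an annulus, in which essential arcs are unlinked. Secondary points needing care are that the three descriptions of $K$ are exhaustive, that $\phi$ can be made Galois after refinement, and that the edge-adding construction in the second step really is harmonic.
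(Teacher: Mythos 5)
Your proof is correct, but it takes a genuinely different route from the paper. The paper deletes the orbit $\{e,\alpha(e)\}$ of a single cycle-closing edge of $G/\alpha$, observes that the remainder $G_0$ is connected and hyperelliptic, invokes its cofaciality lemma to place the four endpoints $a,a',b,b'$ on the outer face of a symmetric planar embedding, and then re-inserts the two edges either in the plane or across the identifications of $[-1,1]^2$. You instead decompose along the unique cycle $Z'$ of the base, classify the core $K=\phi^{-1}(Z')$ into three types, and route the planar tree-preimages $H_j$ into faces or as parallel essential bands in the complementary annulus; your add-an-edge-and-delete-it trick independently re-derives the cofaciality statement that the paper proves as a separate lemma. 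Both arguments rest on the planarity theorem for hyperelliptic graphs. What the paper's version buys is brevity -- one surgery and one lemma. What yours buys is structure: it isolates exactly when a bielliptic graph fails to be planar (only the connected-free-double-cover core, where $\ge 3$ antipodal attaching pairs interleave, precisely the $K_{3,3}$ mechanism), and the ``essential arcs in an annulus are mutually parallel'' step is a transparent explanation of why genus one always suffices -- arguably closer to the spirit of the open Question posed after the theorem. Two small points you should tighten: making the attachment vertices $a_j'$ distinct ``by refinement'' is really a regrouping (take $T_j'$ to be the union of all trees rooted at $a_j'$); and the nestedness of attaching pairs within a bigon face deserves its one-line justification, namely that $\iota$ acts on the boundary circle of that face as a reflection, and orbit pairs of a reflection of a circle never interleave.
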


Since the utility graph $K_{3,3}$ is bielliptic, this is the best that
could be hoped for. In fact, we are led to the following question.

\begin{Question} If $G$ is a graph which admits a degree 2 harmonic morphism
to a graph $G'$ of (Euler) genus $g$, is the orientable genus of $G$ at most
$g$?
\end{Question}

An affirmative answer to this question would not be totally optimal - e.g.,
 $K_5$ admits a degree 2 morphism to a genus 2 graph, but is toroidal. We know
of no counterexamples to this statement and the proof of Theorem
\ref{biellthm} suggests extensions but does not itself
 extend beyond the genus one case.

Much of this paper was developed in conversation with Spencer Backman. We thank
him for numerous ideas.

\section{Preliminaries on the involutions of graphs and hyperelliptic graphs}

Although the notion of a hyperelliptic graph is well-established,
we prefer to use the
equivalent definition furnished by the hyperelliptic involution \cite{BNHypell}.

\begin{dfn} A \newword{mixing involution} on a graph $G$ is an order-two
automorphism $\alpha: G \to G$ such that if $e$ is an edge between $x$ and
 $y$ fixed by $\alpha$ then $\alpha(x) = y$.
\end{dfn}

Note that a graph with a mixing involution $\alpha$ and without loops cannot
have any edges $e$ fixed by $\alpha$ between $\alpha$-fixed vertices $x$ and
 $y$. Note that if $G$ is a graph with loops, then the graph $G'$ obtained
by deleting those loops has the same orientable genus. We therefore make
our first reduction.

\begin{Reduction} Hereon, all graphs will be loopless.\end{Reduction}

We are now in the proper setting to consider
\newword{harmonic morphisms of graphs} \cite[\S 2.1]{BNHypell}, an example
 of which is given by the quotient of a graph $G$ by a mixing involution
$\alpha$. Notably, the quotient $G/\alpha$ has vertices of the form
 $\{v,\alpha(v)\}$ such that $v$ is a vertex of $G$ and edges of the form
 $\{e,\alpha(e)\}$ such that the bounding vertices of $e$ are inequivalent
under $\alpha$. One defines a map $G \to G/\alpha$ by sending vertices to
the obvious place, edges to the obvious place provided that their bounding
vertices are non-equivalent under $\alpha$. If $e$ is an edge of the form
$e(v, \alpha(v))$ then of course we must send $e$ to the quotient vertex
$\{v,\alpha(v)\}$.

In the terminology of Baker-Norine, if $G$ has at least 3 vertices, this map
is a harmonic morphism of degree 2.
All such morphisms on graphs with at least 3 vertices arise this way
\cite[Lemma 5.6]{BNHypell}. If $G$ has two vertices, then there is an obvious
mixing involution and the quotient is a point, and thus a tree,
and it is only because that map is constant that we do not say it has degree 2.

\begin{dfn}We say that a connected graph $G$ admitting a
mixing involution $\iota:G\to G$ such that $G/\iota$ is a tree is
\newword{hyperelliptic} and that $\iota$ is the corresponding
\newword{hyperelliptic involution}.\end{dfn}

This is a slightly nonstandard definition in that we don't require the genus
to be at least 2. Typically one stipulates that because when $G$
is 2-edge-connected and has genus $\ge 2$, such an involution must be unique
\cite[Corollary 5.15]{BNHypell}. Thankfully we can reduce to the
2-edge-connected case without pain by contracting all its bridges
 \cite[Corollary 5.11]{BNHypell}. There are no 2-edge connected trees, and the
only 2-edge connected genus one graphs are cycles, which are planar.

\begin{Reduction} Hereon, when we refer to the graph $G$, we will mean it to be
2-edge connected.
\end{Reduction}

Note also that a graph with all its bridges contracted has the same orientable
genus as the original graph. Of course we will allow other graphs to not be
2-edge connected. Indeed $G/\iota$ will often be a tree in what follows.
Before proceeding further, we review some examples.

\section{Hyperelliptic graphs associated to Shimura curves}

The literature on Shimura curves which is relevant to the task at hand is
simply too large and too technical to introduce here in a meaningful way. Let
it suffice to say that Ogg has determined all Shimura curves $X^D$ which are
hyperelliptic over $\overline\QQ$ \cite{Ogg}. In particular, note that in each
case $D$ is the product of two primes and so there are only two primes of bad
reduction to explore. In each case, the dual graph is also hyperelliptic.
The following code verifies that all of these dual graphs are planar.

\begin{verbatim}
Dlist := [26,35,38,39,51,55,57,58,62,69,74,
82,86,87,93,94,95,111,119,134,146,159,194,206];
// Ogg's list of Shimura curves hyperelliptic over QQbar
del := function(x)
if x eq 0 then return 0;
else return 1;
end if;
end function;
ReducedDualGraph := function(p,q)
// Returns in magma format the dual graph of X^{pq} over FFpbar
// Rather, the "reduced dual graph" with parallel edges collapsed
M := BrandtModule(q,1);
d := Dimension(M);
Mx := MatrixRing(Integers(),d);
Bx := Mx!HeckeOperator(M,p);
for i in [1..d] do for j in [1..d] do
Bx[i,j] := del(Bx[i,j]);
end for; end for;
return Graph<2*Dimension(M)|BlockMatrix(2,2,[[Mx!0,Bx],[Bx,Mx!0]])>;
end function;
for D in Dlist do
G1 := ReducedDualGraph(PrimeDivisors(D)[1],PrimeDivisors(D)[2]);
G2 := ReducedDualGraph(PrimeDivisors(D)[2],PrimeDivisors(D)[1]);
D,IsPlanar(G1),IsPlanar(G2);
end for;
\end{verbatim}

Similar lists exist for, e.g., bielliptic Shimura curves, each of which has
$D\le 546$. Similar code to the above suggests that if $X^D$ has a dual graph
(of its reduction modulo $p$ for $p\mid D$) which is planar and has at least
six vertices, then for $D\ge 500$ the complete list of $(D,p)$ is

$$
\begin{array}{r|l}
p & D \\ \hline
2 & 510,546 \\
3 & 510,570,690 \\
5 & 690,910,1110 \\
7 & 798,910 \\
11 & 1122 \\
13 & 1365\\
29 & 667,2958.
\end{array}
$$

\section{Planarity and Toroidality of graphs with involutions}

Suppose now $G$ is a graph which is 2-edge-connected, loopless, and has a
hyperelliptic involution $\iota$. A given vertex can be either fixed or moved
by $\iota$. We let $F$ denote the set of vertices which are fixed by $\iota$.
By definition, all other vertices are permuted, and there must be an even
number of these. Let $A$ and $B$ be disjoint sets of permuted vertices:
we let $a_1, \ldots, a_n$ be
the elements of $A$, so $B = \{b_1 = \iota(a_1), \ldots, b_n = \iota(a_n)\}$.

The edges of $G$ must therefore fall into one of the following categories.

\begin{itemize}
\item The set $E_A$ of edges from $A$ to itself.
\item The set $E_B = \iota(E_A)$ of edges from $B$ to itself.
\item The set $E_F$ of edges from $F$ to itself.
\item The ``horizontal edges'' $H$ from some $a_i$ to $b_i$.
\item The ``cross edges'' $C$ from some $a_i$ to some $b_j$ such that $i\ne j$.
\item The ``transfer edges'' $T_A$ and $T_B$ respectively from $F$ to $A$
and $F$ to $B$. Note that $T_B = \iota(T_A)$.
\end{itemize}

We note some properties of subgraphs of $G$.

\begin{Lemma}
The involution $\iota$ maps the subgraph $(A,E_A)$ isomorphically onto $(B,E_B)$
and both are a finite disjoint union of trees.
\end{Lemma}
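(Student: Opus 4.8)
The plan is to show both halves of the statement: that $\iota$ restricts to an isomorphism $(A,E_A)\xrightarrow{\sim}(B,E_B)$, and that $(A,E_A)$ is a disjoint union of trees. The first half is essentially bookkeeping: since $\iota$ is an automorphism of $G$ of order two with $B=\iota(A)$ and $A\cap B=\emptyset$, it carries edges within $A$ to edges within $B$ bijectively, and $E_B$ was \emph{defined} to be $\iota(E_A)$, so $\iota|_{(A,E_A)}$ is a graph isomorphism onto $(B,E_B)$ with inverse $\iota|_{(B,E_B)}$. Because $(A,E_A)$ and $(B,E_B)$ are isomorphic, it suffices to prove that $(A,E_A)$ is a disjoint union of trees, i.e.\ that it is acyclic (it is automatically a finite graph, and a finite acyclic graph is a disjoint union of trees).

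For the acyclicity, I would argue by contradiction using the hyperelliptic hypothesis that $G/\iota$ is a tree. Suppose $(A,E_A)$ contains a cycle $\gamma = a_{i_0} a_{i_1} \cdots a_{i_k} = a_{i_0}$, with all edges in $E_A$. Under the quotient map $\pi\colon G\to G/\iota$, each vertex $a_{i_j}$ maps to the class $\{a_{i_j},b_{i_j}\}$, and since the two endpoints of each edge of $\gamma$ lie in $A$ (hence are inequivalent under $\iota$ — here I use that the $a$'s are distinct and $\iota$ swaps $A$ with the disjoint set $B$), each edge of $\gamma$ descends to a genuine edge of $G/\iota$. This produces a closed walk in $G/\iota$. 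To get a contradiction with $G/\iota$ being a tree, I need this closed walk to be non-trivial — i.e.\ not immediately backtracking — which follows because $\pi$ restricted to $A$ is injective (distinct $a$'s give distinct classes), so the images $\pi(a_{i_0}),\ldots,\pi(a_{i_k})$ form an honest cycle in $G/\iota$. A tree has no cycles, contradiction. Hence $(A,E_A)$ is acyclic.

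I expect the main subtlety to be making precise the claim that a cycle in $(A,E_A)$ maps to a genuine (non-degenerate) cycle in $G/\iota$ rather than a contractible or backtracking walk. Two points need care: first, that no edge of $E_A$ is of the ``horizontal'' type $a_i b_i$ (true by definition, since both endpoints are in $A$), so every edge of the cycle actually survives in the quotient; and second, that two \emph{distinct} parallel edges between $a_i$ and $a_j$ in $E_A$ do not get identified in $G/\iota$ — but since $\iota$ maps $E_A$ into $E_B$ and these edge-sets are disjoint, $\pi$ is injective on $E_A$, so no collapsing occurs. With both points in hand the image is a bona fide cycle. One should also remark that the argument does not need $G$ to be connected in any essential way beyond what is assumed, and that loops are already excluded by our standing reduction, so $(A,E_A)$ has no loops either; combined with acyclicity and finiteness this gives the disjoint-union-of-trees conclusion.
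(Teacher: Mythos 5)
Your proposal is correct and follows essentially the same route as the paper: both arguments rest on the observation that $(A,E_A)$ maps isomorphically onto a subgraph of the tree $G/\iota$ (the paper then invokes directly that a subgraph of a tree is a forest, while you derive acyclicity by contradiction, checking injectivity of the quotient map on $A$ and on $E_A$ explicitly). Your added care about parallel edges not being identified in the quotient is a reasonable elaboration of the same idea, not a different proof.
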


\begin{proof}
The isomorphism between the two is simply given by restricting $\iota$ to
$(A,E_A)$. We must therefore have an isomorphic copy of $(A,E_A)$ in the
quotient $G/\iota$, which is a finite connected tree. Any subgraph of a tree
must be a disjoint union of trees and so the result follows.
\end{proof}

\begin{Lemma} The connected components of the subgraph $(F,E_F)$ are either
single vertices or chains of vertices $f_1, \ldots, f_r$ such that between
$f_i$ and $f_{i+1}$ there are exactly two edges and between $f_i$ and $f_j$
there are no edges if $|i-j|>1$.
\end{Lemma}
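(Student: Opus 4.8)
The plan is to read the structure of $(F,E_F)$ off the quotient tree $T:=G/\iota$, in the same spirit as the preceding lemma treated $(A,E_A)$. The first point to record is that no edge of $E_F$ is fixed by $\iota$: if $e\in E_F$ joins $x,y\in F$ then $\iota(x)=x$ and $\iota(y)=y$ are distinct vertices (loops having been discarded), so $\iota(x)\neq y$, and since $\iota$ is mixing this forbids $\iota(e)=e$. Hence $\iota$ acts on $E_F$ as a fixed-point-free involution, pairing each $e$ with $\iota(e)$, and because $\iota$ fixes both $x$ and $y$ the edge $\iota(e)$ again joins $x$ to $y$. Consequently the singletons $\{v\}$ with $v\in F$, together with the classes $\{e,\iota(e)\}$ with $e\in E_F$, form a subgraph $\overline{F}\subseteq T$, and the quotient map $q\colon G\to T$ restricts to a bijection $F\to V(\overline{F})$ on vertices and a $2$-to-$1$ surjection $E_F\to E(\overline{F})$ on edges, the fibre over an edge $\overline{e}$ of $\overline{F}$ being exactly the $\iota$-orbit $\{e,\iota(e)\}$ mapping to it.

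Since $T$ is a tree, $\overline{F}$ is a forest, and in particular has no two parallel edges. Two distinct $\iota$-orbits in $E_F$ with the same pair of endpoints $x,y$ would map to two distinct parallel edges of $\overline{F}$ between $\{x\}$ and $\{y\}$, which is impossible; so between any two vertices of $F$ there are either $0$ or exactly $2$ edges of $E_F$. It follows that each connected component of $(F,E_F)$ is the edge-doubling of a connected component of $\overline{F}$, and the latter, being a component of a forest, is a tree $\overline{T}$. The lemma therefore reduces to showing that every such $\overline{T}$ is a path: a tree all of whose vertices have degree at most $2$ is a path $\{f_1\}-\{f_2\}-\cdots-\{f_r\}$, and doubling its edges yields exactly two edges between $f_i$ and $f_{i+1}$ and — by the $0$-or-$2$ dichotomy just established — no edges between $f_i$ and $f_j$ when $|i-j|>1$ (the degenerate case $r=1$ being a single-vertex component).

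It remains to rule out a vertex of $\overline{T}$ of degree $\geq 3$, equivalently a fixed vertex $v$ with three fixed neighbours $w_1,w_2,w_3$, and this is the step that must use that $G$ is $2$-edge-connected. Were such a $v$ present, deleting the vertex $\{v\}$ from the tree $T$ would split it into components $C_1\ni\{w_1\},\,C_2\ni\{w_2\},\,\dots$, and pulling back, the sets $D_i:=q^{-1}(C_i)$ would be pairwise non-adjacent in $G$, each joined to the rest of $G$ only through $v$ and only by the doubled edge between $v$ and $w_i$. One then extracts a contradiction from $2$-edge-connectivity (all bridges having been contracted), which forces the degree bound and hence the chain structure. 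I expect this last step to be the main obstacle: because each $D_i\cup\{v\}$ is already bridgeless once its edges are doubled, the argument cannot be purely local and has to invoke the global $2$-edge-connectivity of $G$ — or else the pendant pieces $D_i$, which contribute nothing to the orientable genus, must be handled as a separate case.
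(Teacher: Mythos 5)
Your reduction of the problem is essentially the paper's, done more cleanly: you show no edge of $E_F$ is $\iota$-fixed, push $(F,E_F)$ down to a subforest $\overline{F}$ of the quotient tree, and deduce both the $0$-or-$2$ parallel-edge dichotomy and that the underlying simple graph of each component is a tree. (The paper reaches the same two facts by observing that a third parallel edge, or a triangle, would create a cycle in $G/\iota$.) The step you flag as unresolved --- excluding a fixed vertex with three or more fixed neighbours, i.e.\ showing each tree is a path --- is genuinely missing from your argument, but your suspicion that it cannot be supplied is correct: it cannot. Take $G$ to be the doubled star on vertices $v,w_1,w_2,w_3$ with exactly two parallel edges between $v$ and each $w_i$, and let $\iota$ fix every vertex and transpose each parallel pair. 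Then $G$ is loopless, $2$-edge-connected, of genus $3$, $\iota$ is a mixing involution, and $G/\iota\cong K_{1,3}$ is a tree, so $G$ is hyperelliptic in the sense used here, with $F=V(G)$ and $E_F=E(G)$; the unique component of $(F,E_F)$ has a degree-three vertex and is not a chain. The paper's own proof has exactly the gap you identified: after establishing the dichotomy and excluding triangles it asserts ``the result follows'' without ever addressing branch vertices, and no hypothesis in force ($2$-edge-connectivity, uniqueness of the involution) rules them out.

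The substantive content you did prove --- each component of $(F,E_F)$ is a tree with every edge doubled, hence planar --- is the correct weaker statement, and it is all that the paper's subsequent Reduction (contracting each such component to a point) actually draws on. So the right fix is to state and use that weaker conclusion rather than to keep searching for an argument that the components are chains.
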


\begin{proof} Let $e\in E_F$ and let $f,f'$ be the bounding vertices of $e$.
Since $\iota$ fixes $f,f'$ and $\iota$ is mixing, we must have $\iota(e) \ne e$.
Therefore there are at least 2 edges between $f$ and $f'$. If we
suppose to the contrary that there was a
third edge $e'$ then $\iota(e')$ would be distinct from $e'$ again by the
mixing property. But also since $e'\ne e$ and $e' \ne \iota(e)$ we must also
have $\iota(e') \ne e$ and $e' \ne \iota(e)$. The quotient graph $G/\iota$
would then have a cycle $ee'$ and since the hyperelliptic involution is unique
we have a contradiction.

Therefore between any two vertices $f,f'$ in our subgraph $(F,E_F)$ there are
either zero or two edges. If $f,f',f''$ each have two edges between them, then
in the quotient, we would have a cycle $e(f,f')e(f',f'')e(f'',f)$. The result
follows.
\end{proof}

We see therefore that $(F,E_F)$ is planar, and although a given connected
component may have a cycle, and for the purpose of orientable genus we
may think of each one as a point. We can therefore make the following
reduction by replacing $F$ with the set of connected components of $F$ and
$E_F$ by the empty set.

\[
\xymatrix{
&&&&&&&&&\\
&\bullet_{f_1}\ar@/^/@{-}[r]\ar@/_/@{-}[r]\ar@{-}[lu]\ar@{-}[ld] &
\bullet_{f_2}\ar@/^/@{-}[r]\ar@/_/@{-}[r]\ar@{-}[u]\ar@{-}[d] &
\cdots &
\bullet_{f_r}\ar@{-}[u]\ar@{-}[d]\ar@{-}[ru]\ar@{-}[rd]&&
\leadsto &&
\bullet \ar@{-}[u]\ar@{-}[ur]\ar@{-}[ul]\ar@{-}[l]\ar@{-}[r]\ar@{-}[ld]\ar@{-}[dr]\ar@{-}[d]&\\
&&&&&&&&&\\
}
\]

\begin{Reduction} We will assume $E_F$ is empty.\end{Reduction}

In the same way, we can replace $(A,E_A)$ and $(B,E_B)$ by the connected
components of each.

\begin{Reduction} We will assume $E_A$ and $E_B$ are empty.\end{Reduction}

Note that if we were to refine $G$ by adding a point in the middle of each
horizontal edge we would obtain a new graph. Embedding
this refined graph into an
orientable surface of genus $g$ induces an embedding of $G$
into the same surface.
We therefore refine $G$ by
adding a new element each of $F$, $T_A$ and $T_B$ as we eliminate $H$.

\begin{Reduction} We assume that $G$ has no horizontal edges.\end{Reduction}

We are now ready to prove our main theorem on hyperelliptic graphs.
If we can embed any connected graph $G$ into the plane,
then by adding a point at infinity, we give an embedding of this graph into
the 2-sphere $S^2$, and in fact that graph defines a CW-decomposition of $S^2$.
For instance, if $G$ has genus $g$ then this decomposition has $V(G)$ vertices,
$E(G)$ edges, and $g+1$ faces. By spherical inversion we can simply assume that
any one pair
$\{a_j,b_j\}$ lies on the same face as $\infty$, or that they lie
on the ``outside face.'' We will freely perform this in the following.

\begin{Theorem}\label{MainThm}
All hyperelliptic graphs are planar. Moreover there is an
embedding $\rho_G$ into $\RR^2$ under which any pair $\{a_j,b_j\}$ exchanged
by the hyperelliptic involution $\iota$ lie on a common face.
\end{Theorem}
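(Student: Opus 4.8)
The plan is to induct on the number $n$ of orbit pairs $\{a_j,b_j\}$, after all the reductions have been applied: so we may assume $G$ is $2$-edge-connected, loopless, with $E_F=E_A=E_B=\emptyset$ and no horizontal edges, and the only edges are cross edges $C$ and transfer edges $T_A\cup T_B$. The base case $n=0$ is vacuous or trivial (the graph lives on $F$ alone, with no edges, hence is a single point after the $E_F$-reduction). For the inductive step I would single out one pair $\{a_n,b_n\}$, delete it along with all incident edges, and apply the inductive hypothesis to the smaller graph $G'$ --- but here one must be careful, because deleting $\{a_n,b_n\}$ need not preserve $2$-edge-connectedness, and the quotient $G'/\iota$ need not remain a tree. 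So instead I expect the right move is to contract: look at where $\{a_n,b_n\}$ sits in the tree $T=G/\iota$, and peel off a pair whose image $\{a_n,b_n\}$ is a leaf of $T$ (such a pair exists since $T$ is a finite tree with at least one edge, unless $n=0$). A leaf of the quotient corresponds, upstream, to a pair $\{a_n,b_n\}$ joined to the rest of $G$ by exactly the preimage of one edge of $T$: either a single cross edge together with its $\iota$-image (two cross edges $a_n b_k$ and $b_n a_k$ for some $k<n$), or a transfer edge together with its image (one edge from $a_n$ to some $f\in F$ and one from $b_n$ to $f$). In either case, collapsing that leaf pair onto its neighbour yields a smaller hyperelliptic graph $G'$ (still $2$-edge-connected, still with tree quotient), to which induction applies.

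Given the planar embedding $\rho_{G'}$ from the inductive hypothesis, with the neighbouring pair placed on the outside face, the goal is to re-insert $\{a_n,b_n\}$ and its two edges without crossings, and to arrange that the new pair again lies on a common face so the induction carries. I would do this by drawing $a_n$ and $b_n$ as two new vertices inside the outer face, near their attachment point, each joined by one arc to its respective endpoint; since the two attachment endpoints ($a_k$ and $b_k$, or the single vertex $f$) were cofacial by hypothesis, there is room in that face to route both arcs disjointly, and $a_n,b_n$ end up on a common (sub)face of the resulting embedding. Spherical inversion then lets me move any other desired pair to the outside face for the next step. This simultaneously proves planarity and the ``moreover'' clause about a common face.

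The main obstacle I anticipate is the bookkeeping around $2$-edge-connectedness and the tree-quotient condition under the reduction to a leaf: I need to verify that a leaf of $T=G/\iota$ really does pull back to exactly one of the two configurations above (cross-pair or transfer-pair) and that contracting it leaves $G'$ in the class of graphs the theorem is about --- in particular that $G'$ is still $2$-edge-connected and that $\iota$ descends to a hyperelliptic involution on $G'$ with $G'/\iota$ a tree (using uniqueness of the hyperelliptic involution, \cite[Corollary 5.15]{BNHypell}). A secondary point needing care is the ``common face'' invariant: I must check the outside face of $\rho_{G'}$ can always be chosen to contain whichever attachment vertices the re-insertion needs, and that after re-inserting $\{a_n,b_n\}$ the embedding still admits, for every pair, a planar re-embedding with that pair outside --- this is where the freedom of spherical inversion, noted just before the theorem statement, is used. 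Everything else (Euler's formula count, the actual arc-routing) is routine once the combinatorial reduction is set up correctly.
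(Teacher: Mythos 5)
Your inductive step rests on finding a pair $\{a_n,b_n\}$ whose image is a leaf of the quotient tree $T=G/\iota$, and this is where the argument breaks: such a pair never exists. After the reductions you invoke (no horizontal edges, $E_A=E_B=E_F=\emptyset$), distinct edges incident to $a_n$ map to distinct edges of $T$ incident to $\{a_n,b_n\}$, so $\deg_T(\{a_n,b_n\})=\deg_G(a_n)$. Since $G$ is $2$-edge-connected it has no degree-one vertices, so every exchanged pair has degree at least $2$ in $T$; the leaves of $T$ are always fixed vertices. (Concretely: take the two $4$-cycles $f_1a_1f_2b_1$ and $f_2a_2f_3b_2$ glued at $f_2$; the quotient is the path $f_1,\{a_1,b_1\},f_2,\{a_2,b_2\},f_3$, and neither pair is a leaf.) So the peeling step you propose can never be carried out, and your induction on the number of pairs does not get off the ground for $n\ge 2$.

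The paper's proof goes the other way, and it is essentially the option you raised and then discarded: it deletes the (high-degree, hub-like) pair $\{a_n,b_n\}$ outright, accepts that the remainder $G(n-1)$ disconnects, and observes that each horizontal component $\Gamma_i$ of the remainder is itself a smaller hyperelliptic graph (or a single exchanged vertex pair) attached to $a_n$ by exactly one cross or transfer edge --- more than one would create a cycle in the quotient. The inductive embeddings of the $\Gamma_i$ are then stacked in a vertical column, $a_n$ and $b_n$ are placed to either side, and the connecting arcs are drawn symmetrically. The genuinely delicate point --- which your sketch defers as ``a secondary point needing care'' --- is that attaching these arcs can destroy the cofaciality of a pair $\{a_j,b_j\}$ lying on the outer face of some $\Gamma_i$; the paper resolves this with a mod-$2$ intersection-number argument against the Jordan curve $\gamma_j^+\cup\gamma_j^-$, rerouting the connecting path $d_i$ so that it crosses that curve exactly once. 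As written, your proposal contains neither the correct decomposition nor this rerouting argument; the first omission is fatal to the approach as stated.
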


\begin{proof} We induct on the size of $\# A = \# B$.
The following will be our inductive assumption.

\begin{itemize}
\item {\bf Ind}(n): All connected hyperelliptic graphs with
$\#A = \#B \le n$ admit a piecewise smooth (considering $G$
e.g., as a simplicial complex)
embedding $\rho_G: G \to \RR^2$ such that
\begin{enumerate}
\item If $\rho(v) = (x,y)$ then $\rho(\iota(v)) = (-x,y)$ and
\item If $\{a_i,b_i\}$ are exchanged by $\iota$ then there is a face $F$
of the CW decomposition of $S^2$ induced by $\rho_G$ such that
$a_i,b_i\in\partial F$.
\end{enumerate}
\end{itemize}

Clearly ${\bf Ind}(0)$ holds as we have shown that a hyperelliptic
graph which fixes each vertex is planar. Almost-as-clearly, ${\bf Ind}(1)$
holds because there are no crossing edges, and so all edges are transfer edges
by our reductive step. Since $G$ is connected, between each fixed point $f$
there is at least one transfer edge between $f$ and $a_1$ as well as $f$ and
$b_1$. There is also at most one such edge, because if there were two edges
between $f$ and $a_1$ then there would be a cycle in the quotient. It follows
that after our reductions, $G$ embeds into the plane as the banana graph
with midpoints. Of course, both $a_1$ and $b_1$ lie on the outside face.

Now suppose that $G$ has $\#A = n$ and ${\bf Ind}(n-1)$ is satsified. We let
\begin{itemize}
\item $A(n-1) = \{ a_1, \ldots, a_{n-1}\}$ and $B(n) = \iota(A(n-1))$
\item $T_A(n-1) = \{$ edges from $F$ to $A(n-1)\}$ and
$T_B(n-1) = \iota(T_A(n-1))$.
\item $C(n-1) = \{$ cross edges from $A(n-1)$ to $B(n-1)\}$.
\end{itemize}

We therefore let $G(n-1)$ be the graph whose vertices are
$A(n-1) \cup B(n-1) \cup F$
and whose edges are $T_A(n-1) \cup T_B(n-1) \cup C(n-1)$. As
$G(n-1)/\iota$ is a
subgraph of $G/\iota$, it is a finite disjoint union of trees. Let
$\Gamma_1,\ldots, \Gamma_m$ be the horizontal connected components of
$G(n-1)$, i.e. $\Gamma_i$ is either connected or the union of two
vertices exchanged by $\iota$. All of the
images of the $\Gamma_i$ in the quotient are connected trees. We note that
the connected
$\Gamma_i$ are hyperelliptic and so satisfy the conclusions
of ${\bf Ind}(n-1)$. 

Since $G$ is connected, for each $i$ there is a pair of transfer edges or a
pair of cross edges from $\{a_{n},b_{n}\}$ to $\Gamma_i$. In fact, there can be
either a cross edge $c_i$ from $a_n$ to some $b_k$ in $\Gamma_i$ or a transfer
edge $t_i$ from $a_n$ to a fixed point $f$ in $\Gamma_i$ and not both.
There cannot be more than
one else there would be a cycle in the quotient.

We therefore create a function $\psi_G:\{1,\ldots, m\} \to \{0,1\}$ where
$\psi(i)$ is $0$ if there is a transfer edge $a_{n}$ to $\Gamma_i$ and $1$
in the case of a cross edge. We roughly create $\rho_G$ as follows:
${\bf Ind}(n-1)$ gives us an embedding of each $\Gamma_i$ into $\RR^2$, but
moreover we can scale down into $[-1,1]^2$ and still be symmetric under $\iota$.
We stack each copy of $[-1,1]^2$ vertically in $\RR^2$, put $a_n$ to the left
of this column, $b_n$ to the right, and either directly attach the transfer
edge if $\psi_G(i)=0$ or possibly
first apply $\iota$ to $\Gamma_i$ before attaching the
cross edge if $\psi_G(i) =1$. Hidden in this is that if $\psi_G(i)=0$ we need
to make sure to perform spherical inversion to make sure that the fixed point
$f$ is on the outside face, and if $\psi_G(i)=1$ we need to make sure that both
$a_k$ and $b_k$ are on the outside face. This latter part explains the second
condition of
 ${\bf Ind}(n)$ and the remainder of the proof is simply verifying the
conditions of ${\bf Ind}(n)$ and making the construction explicit.

As noted, if $\psi_G(i) = 0$ then we may assume that our $\rho_{\Gamma_i}$ 
has $f$ on the outside
face. By scaling and shifting up or down we may assume that $\rho_{\Gamma_i}$
has image in the interior of $[-1,1]^2$ which is symmetric about the $y$-axis
 and $\rho_{\Gamma_i}(f) = (0,0)$. We may therefore draw
a symmetric pair of edges between $(0,0)$ and $(\pm 1, 0)$ which do not
intersect $\Gamma_i$. Note that these two new edges split the outside face of
$[-1,1]^2$ into two, but that $\Gamma_i$ lies entirely on one side of that
divide, so adding these edges does not change whether ${\bf Ind}(n)$
is satisfied. If $\psi_G$ is identically zero,
we embed a refinement of $G$ into
$\RR^2$ as follows: send $a_{n}, b_{n}$ to $(\pm 1, 0)$, use $\rho_{\Gamma_i}$
to send $\Gamma_i$ to $\{(x,y): -1\le x\le 1, i-1\le y\le i+1\}$. We can
symmetrically draw edges between $(\pm 1,0)$ and $(\pm 1, i)$ which are pairwise
disjoint and this produces an embedding $\rho_G$ which is symmetric under
$\iota$ and preserves the face condition of our inductive
assumption for $G$.
 
Now let's assume there are some $i$ such that $\psi_G(i) =1$. We assume that
$\Gamma_i$ is connected, else it is the disjoint union of two vertices, and
adding some cross edges does not change the face condition of ${\bf Ind}(n)$.
Let $\rho_i = \rho_{\Gamma_i}$ be an embedding so that $\rho_i(a_k),\rho_i(b_k)$
lie on the outside face with respectively positive and negative $x$-values,
and let $d_i$, $d_i'$ respectively be paths $(-1,0)$ to
$\rho_i(b_k)$ and $(1,0)$ to $\rho_i(a_k)$ such that $d_i' = \iota(d_i)$.  Could
it be that $d_i, d_i'$ put $a_j$ and $b_j$ on different faces?
\begin{itemize}
\item By
${\bf Ind}(n-1)$, $\rho_i(a_j)$ and $\rho_i(b_j)$ share a face,
and we need only worry
if it is the outside face.
\item If $a_j = a_k$
then $a_j$ still lies on the same face as $b_j$ even after adding $d_i$ and
$d_i'$.
\end{itemize}

So we assume $a_j$ and $b_j$ lie on the outside face, let $\gamma_j^+$
and $\gamma_j^-$ be smooth symmetric paths from $\rho_i(a_j)$ to $\rho_i(b_j)$
which lie above and below $\rho_i(\Gamma_i)$, meeting only at $\rho_i(a_j)$ and
$\rho_i(b_j)$. As such, $\gamma_j^+\cup \gamma_j^-$ forms a simple
Jordan curve, which
has an inside and outside defined by the mod 2 intersection number
\cite[\S 3.3]{GPTop}. Since $a_j\ne a_k$, the path $d_i$ has an odd
number of transverse
intersection points with $\gamma_j^+\cup \gamma_j^-$
up to multiplicity. If there is just
one, we are done, as it has to lie on precisely one of $\gamma_j^+$ and
$\gamma_j^-$. The non-intersecting path lies within the face we desire.
If there are three or more, we may pick an $\varepsilon>0$ less
than the distance from $\rho_i(\Gamma_i)$ to any of the points of
$d_i\cap(\gamma_j^+\cup\gamma_j^-)$. There is thus a smooth path between
$\rho_i(b_j)$ and $(-1,0)$ which agrees with $d_i$ at distance less than
$\varepsilon$ from $\rho_i(\Gamma_i)$, which is homotopic to $d_i$, and which
has precisely one point of intersection with $\gamma_j^+ \cup \gamma_j^-$. By
replacing $d_i$ with this path and $d_i'$ by the image under $\iota$ we have
reduced to the previous case. We conclude that
${\bf Ind}(n)$ holds and the proof
of our Theorem is complete.\end{proof}

For good measure, we give a second proof of the planarity of hyperelliptic
graphs.

\begin{proof}
By work of de Bruyn and Gijswijt \cite{Treewidth},
we know that for all graphs $G$, the stable
gonality of $G$ is bounded below by the treewidth of $G$. We know that $G$
is hyperelliptic if and only if the stable gonality is 2.
Since $G$ is hyperelliptic, we find that it has treewidth
2, and therefore is a subgraph of a series-parallel graph \cite{SP},
and is therefore planar.\end{proof}

There is also a third proof of this result due to Spencer Backman which
characterizes the ear decomposition of a hyperelliptic graph and which
predates work of de Bruyn and Gijswijt but was not written up.
While it may not seem so, these proofs work out to being very similar. Since
$G$ is hyperelliptic, $G/\iota$ is a tree. We may think of the inductive proof
as rooting that tree and thus
producing an embedding into a series-parallel graph. Note that our embedding
$\rho_G$ gives $G/\iota$ as $\rho_G(G)\cap\{(x,y): x\le 0\}$, so the source and
sink vertices are respectively $a_n$ and $b_n$.
The advantage of
working so explicitly is that some natural improvements present themselves.

\begin{Lemma} On any hyperelliptic graph $G$ with two pairs of vertices
$a_i\ne b_i$ and $a_j\ne b_j$ exchanged by the hyperelliptic involution, we can
find an embedding $\rho_{i,j}$ of $G$ into $\RR^2$ such that $a_i,b_i,a_j,b_j$
all lie on the boundary of a face. Moreover, the same is true when $a_i$ and
$b_i$ are replaced by a hyperelliptic fixed vertex.
\end{Lemma}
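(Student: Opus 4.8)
The plan is to prove, by induction on $n=\#A$ and following the constructive pattern of Theorem~\ref{MainThm}, a statement slightly stronger than the Lemma. Call an \emph{object} of $G$ either an exchanged pair $\{a_k,b_k\}$ or an $\iota$-fixed vertex; each object is the fibre over a single vertex of the tree $G/\iota$. The claim ${\bf SInd}(n)$ is: for every $2$-edge-connected hyperelliptic graph $G$ (after the reductions preceding Theorem~\ref{MainThm}) with $\#A\le n$ and every pair of objects $O_1,O_2$, there is a piecewise-smooth $\iota$-symmetric embedding $\rho\colon G\to\RR^2$ (so $\rho\circ\iota$ is reflection in the $y$-axis) under which $O_1$ and $O_2$ lie on a common face. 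The Lemma is ${\bf SInd}(n)$ for $n$ unrestricted; note that the case where $O_1$ and $O_2$ are \emph{both} fixed vertices, though not literally asked for, is needed in the induction, since a component can attach to a peeled pair along a transfer edge. For $n\le 1$ the claim is immediate: when $n=1$ the graph is a banana graph with midpoints, and one chooses the cyclic order of its strands so that the at most two strands carrying $O_1$ and $O_2$ are adjacent, whence the face between them meets $a_1$, $b_1$, and both objects.

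For the inductive step, suppose ${\bf SInd}(n-1)$ holds and consider objects $O_1,O_2$ in a graph with $\#A=n$. If some $O_i$ is an exchanged pair, say $O_1=\{a_p,b_p\}$, the plan is to relabel so $p=n$ and peel off $\{a_n,b_n\}$ exactly as in Theorem~\ref{MainThm}; in the resulting ``ladder'' embedding $a_n$ and $b_n$ lie on the outside face and on every face lying between two consecutively stacked horizontal components. If $O_2=\{a_n,b_n\}$ we are done; otherwise a short argument from $2$-edge-connectedness and $E_A=E_B=H=\emptyset$ shows $O_2$ lies inside a single horizontal component $\Gamma_k$ of $G-\{a_n,b_n\}$, which is connected and $\iota$-stable. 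Let $w$ be the object of $\Gamma_k$ by which $\{a_n,b_n\}$ attaches (a fixed vertex if by a transfer edge, a pair if by cross edges); I would apply ${\bf SInd}(n-1)$ to $\Gamma_k$ with the objects $O_2$ and $w$, then perform spherical inversion so that the face they share becomes the outside face of $\rho_{\Gamma_k}$, and insert $\Gamma_k$ into the ladder as in Theorem~\ref{MainThm}. Routing the attaching edges is handled by the same mod-$2$ intersection argument used there: a symmetric pair of attaching edges may be pushed so as to meet the Jordan curve bounding the shared face of $O_2$ in a single transverse point, so that in the final embedding $O_2$ still lies on a face, which also contains $a_n$ and $b_n$. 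This gives ${\bf SInd}(n)$ in this case.

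If instead $O_1$ and $O_2$ are both $\iota$-fixed vertices, one peels more carefully. In $G/\iota$ the unique $O_1$--$O_2$ path has length $\ge 2$ (its endpoints are distinct, and since $E_F=\emptyset$ no edge of $G/\iota$ joins two fixed vertices), so its first interior vertex corresponds to an exchanged pair; peel that pair off and call it $\{a_n,b_n\}$. Since $a_n,b_n$ are the only vertices of $G$ over that tree-vertex and it lies strictly between $O_1$ and $O_2$ on the unique path, no walk in $G-\{a_n,b_n\}$ joins $O_1$ to $O_2$; hence $O_1\in\Gamma_{k_1}$ and $O_2\in\Gamma_{k_2}$ with $k_1\ne k_2$, each $\Gamma_{k_i}$ connected and $\iota$-stable because it contains a fixed vertex. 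Apply ${\bf SInd}(n-1)$ to $\Gamma_{k_i}$ with the objects $O_i$ and the attaching object $w_i$, invert so their common face is the outside face of $\rho_{\Gamma_{k_i}}$, and build the ladder with $\Gamma_{k_1}$ and $\Gamma_{k_2}$ placed as adjacent rungs oriented so that these two outside faces both open onto the gap between them, inserting the remaining components arbitrarily. After the usual routing of attaching edges, the face between these two rungs meets $O_1$, $O_2$, $a_n$, and $b_n$, so ${\bf SInd}(n)$ holds; this completes the induction and the Lemma follows.

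The principal obstacle is the case analysis in the inductive step. If, in the ``both objects fixed'' case, one peeled an arbitrary exchanged pair instead of one separating $O_1$ from $O_2$ in $G/\iota$, then $O_1$ and $O_2$ could land together in one component $\Gamma_k$ alongside its attaching object, forcing a three-object strengthening of the statement -- which is genuinely false (for $K_{2,4}$, every face is bounded by a $4$-cycle $a f_i b f_j$ and so carries only two of the four degree-$2$ vertices), so peeling along the $O_1$--$O_2$ path is essential rather than merely convenient. The only other delicate point, already present in Theorem~\ref{MainThm}, is verifying by the Jordan-curve / mod-$2$ intersection argument that the attaching edges can be re-routed without separating a marked object from its intended face; as there, one first contracts the bridges of each $\Gamma_k$ so that it may be treated as $2$-edge-connected.
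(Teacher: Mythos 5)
Your proposal is correct and follows essentially the same route as the paper: induct in parallel with Theorem~\ref{MainThm}, relabel so that one marked object is the peeled pair $\{a_n,b_n\}$, apply the inductive hypothesis inside the horizontal component containing the other marked object with the \emph{attachment} object as the second distinguished object, and use spherical inversion to bring their common face to the outside before re-attaching. Where you go beyond the paper is in making the induction close properly: the paper's statement only covers (pair, pair) and (pair, fixed vertex), and its proof explicitly assumes the attachment is by a cross edge (``let $k$ be such that there is a cross edge from $a_n$ to $b_k$''), so the combination of a transfer-edge attachment with a fixed marked vertex --- i.e.\ the (fixed, fixed) case of the inductive hypothesis --- is used but never stated or proved there. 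Your strengthened ${\bf SInd}(n)$, together with peeling a pair that separates the two fixed objects in $G/\iota$ so they land in distinct components, supplies exactly the missing case; and your $K_{2,4}$ example is a genuinely useful observation explaining why one must peel along the separating path rather than strengthen to three objects on a face. The one point treated loosely on both sides is the horizontal components that are \emph{not} $\iota$-stable: these need not be single pairs of vertices (they can be two exchanged trees), though since they are forests the face condition is easy there.
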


\begin{proof}
We proceed by induction in the same way as in the proof of
Theorem \ref{MainThm}. In fact,
if $a_i = a_j$ then our Lemma holds by appealing to Theorem \ref{MainThm}.
Therefore we
suppose that $a_i \ne a_j$ and thus $b_i\ne b_j$. We make all necessary
reductions to retain the notation
of $V(G) = A \cup B \cup F$ and $E(G) = C \cup T$. We know therefore that
$\#A = \# B \ge 2$. In the case of equality, $G$ is outerplanar.
If we do not have equality, we reorder $A$ and
$B$ so that $j = n$ and let $\Gamma_1,\ldots,\Gamma_m$ be the horizontal
connected components of $G(n-1)$ as in the proof of the Theorem.

Let $r$ be
such that $a_i\in \Gamma_r$ and let $k$ be such that there is a cross
edge from $a_n$ to $b_k$.
We apply our inductive hypothesis to $\Gamma_r$ to find an embedding of
$\Gamma_r$ into $\RR^2$ such that $a_i$ and $a_k$ share a face.
We use spherical
inversion to move that face to the outside, and thereby give an embedding
of $G$ into $\RR^2$ such that $a_i$ and $a_n = a_j$ share a face.

If $a_i$ and $b_i$ are replaced by a fixed vertex $f$, then we let $r$ be such
that $f\in\Gamma_r$ and we use spherical inversion to find a planar embedding
of $\Gamma_r$ such that $f$ lies on the outside face. The result follows in the
same way.
\end{proof}

With the above in mind, we recall that a bielliptic graph is one which admits a
mixing involution $\alpha$ such that $G/\alpha$ has genus one. We therefore
have the following.

\begin{Theorem}Bielliptic graphs are toroidal.
\end{Theorem}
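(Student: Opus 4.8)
The plan is to break the unique cycle of $G/\alpha$ by deleting a single $\alpha$-orbit of edges, to embed what remains using Theorem~\ref{MainThm} and the previous Lemma with four chosen vertices forced onto a common face, and then to put the two deleted edges back, at the cost of at most one handle. As the paper remarks, this is exactly the kind of argument that uses ``genus one'' essentially: one cycle costs one handle.

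First I would observe that, just as for hyperelliptic graphs, I may assume $G$ is loopless and $2$-edge-connected without changing its orientable genus or the genus of $G/\alpha$ (contracting a bridge of $G$ corresponds to contracting an edge of $G/\alpha$ that is either already collapsed or, one checks, a bridge of $G/\alpha$, hence not on the unique cycle). Since $G/\alpha$ has first Betti number one it is loopless and connected with a unique cycle, so a spanning tree $\bar T\subseteq G/\alpha$ omits exactly one edge $\bar e$, and because $\bar e$ is an honest edge of $G/\alpha$ its fibre is a genuine two‑element orbit $\{e,\alpha(e)\}$ of edges of $G$. I then set $G^{\flat}=G-\{e,\alpha(e)\}$, so that $\alpha$ restricts to a mixing involution on $G^{\flat}$ with $G^{\flat}/\alpha=\bar T$ a tree.

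Next I would split into two cases according to whether $G^{\flat}$ is connected. If it is disconnected, then since the degree‑two harmonic morphism $G^{\flat}\to\bar T$ has connected target, $G^{\flat}$ has exactly two components; a short argument with the fibres over $\bar T$ shows $\alpha$ must interchange them, each then maps isomorphically onto $\bar T$ (hence is a tree) and $G$ has no $\alpha$‑fixed vertices, so $G$ is two trees joined by the two edges $e,\alpha(e)$, whence $G$ has first Betti number one and is already planar. If instead $G^{\flat}$ is connected, then $G^{\flat}$ is hyperelliptic with hyperelliptic involution $\alpha|_{G^{\flat}}$. Writing $u,w$ for the endpoints of $e$, the fact that $\alpha(e)\ne e$ forces $w\ne\alpha(u)$, so $\{u,\alpha(u)\}$ and $\{w,\alpha(w)\}$ are distinct marked objects in $G^{\flat}$, each being either an $\alpha$‑exchanged pair or a single fixed vertex. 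Applying the previous Lemma — in its two‑pair form, or its pair‑and‑fixed‑vertex form, and, when $u,w$ are both $\alpha$‑fixed, a mild extension proved by the same induction with both marked objects fixed — I obtain an embedding of $G^{\flat}$ into $\RR^{2}$, hence into $S^{2}$, in which $u,\alpha(u),w,\alpha(w)$ all lie on the boundary of one face $D$.

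It then remains to reinsert $e$ and $\alpha(e)$. Drawing $e$ as an arc inside the disk $D$ from $u$ to $w$ splits $D$ into two faces; if $\alpha(u)$ and $\alpha(w)$ end up on the same one of them (which happens unless $u,w$ are both moved and the four points occur in linked cyclic order along $\partial D$, the $K_{3,3}$ situation) then $\alpha(e)$ can be drawn there too and $G$ is planar, while otherwise $\alpha(u)$ and $\alpha(w)$ lie on different faces and attaching one handle joining their interiors turns $S^{2}$ into a torus through which $\alpha(e)$ can be routed. Either way $G$ embeds into a closed orientable surface of genus at most one, so $G$ is toroidal. I expect the connected case to be the crux — in particular verifying that the reductions used for Theorem~\ref{MainThm} survive in the bielliptic setting and that the Lemma (with its fixed‑vertex extension) really does pin all four endpoints to a single face; the handle step itself is routine. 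This is also precisely where the method stalls for higher genus: breaking a genus‑$g$ quotient would delete $g$ orbits and demand re‑inserting them through $g$ handles, which would need a version of the Lemma controlling $2g$ marked vertices simultaneously.
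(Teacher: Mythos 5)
Your proposal is correct and follows essentially the same route as the paper: delete the $\alpha$-orbit of edges lying over the unique cycle edge of $G/\alpha$, dispose of the disconnected case (where the two components are trees swapped by $\alpha$), apply the preceding Lemma to place all four endpoints of $e$ and $\alpha(e)$ on a common face of a planar embedding of the remaining hyperelliptic graph, and reinsert the two edges at the cost of at most one handle. The only cosmetic differences are that the paper realizes the handle explicitly as $\RR^2/2\ZZ^2$ by identifying opposite sides of $[-1,1]^2$, and rules out the low-genus cases up front rather than observing directly that the disconnected case yields a planar graph.
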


\begin{proof}
Without loss of generality, we assume $G$ is 2-edge connected , and that the
genus of $G$ is at least 3, else $G$ is already planar.

Since $G/\alpha$ has genus one, there is an edge $\bar e$ of $G/\alpha$ such
that $G/\alpha - \bar e$ is a tree. Let $e,e'=\alpha(e)$
be the preimages of $\bar e$
in $G$ and let $G_0 = G - \{e,e'\}$ with $\alpha_0$ the induced involution,
whose quotient is $G/\alpha - \bar e$.

\[
\begin{array}{ccc}
G_0 & \hookrightarrow & G \\
\downarrow & & \downarrow \\
G_0/\alpha_0 & \hookrightarrow & G/\alpha
\end{array}
\]

We show first that $G_0$ is connected: if not,
let $a,b$ be the endpoints of $e$ and $G_a, G_b$ the connected components of
each in $G_0$. In which of these can we find $\alpha_0(a)$ and $\alpha_0(b)$?
If there is a path $\gamma_a$
between $a$ and $\alpha_0(a)$ then $G_0$ is
connected, as there is a unique simple path in $G_0/\alpha_0$ between
$\alpha_0^\sim(v_1)$ and $\alpha_0^\sim(v_2)$ for any
$v_1\in G_a$ and $v_2\in G_b$. This path lifts to a path $\gamma$ between either
 $v_1$ and $v_2$ (in which case $G_0$ is connected)
or $v_1$ and $\alpha_0(v_2)$ (in
which case $\gamma_a\alpha_0(\gamma)$ is a path between $v_1$ and $v_2$). Thus
there is no such path $\gamma_a$ when $G_0$ is disconnected. In other words,
when $G_0$ is disconnected, $\alpha_0(a)\not\in G_a$. Since $\alpha_0$ is
an isomorphism, it must exchange $G_a$ with $G_b$ so that
$\alpha_0: G_a\stackrel{\sim}{\to} G_b$.
But then the quotient is a tree, so $G_a$ and $G_b$ are trees.
This however contradicts the statement that the genus of $G$ is at least 3.

It follows then that $G_0$ is hyperelliptic, and therefore planar. Moreover the
embedding is planar in such a way as to recognize $\alpha_0$ as reflection
about the $y$-axis.
Let $a,b$ be the endpoints of $e$ and $a',b'$ be the endpoints of $e'$,
so moreover
we can find a planar embedding of $G_0$ such that $a,a',b,b'$ all lie on the
outside face. The boundary of this outside face is a Jordan curve containing
$a,a',b,b'$ which is broken up into four paths between the four of these points.
If one of these is a path $\delta$ between $a$ and $b$ then another must be a path
$\delta'$ between $a'$ and $b'$. In this case, $G$ itself is planar. If not,
there are paths from $a$ to $a'$ and $b'$ in the boundary,
and we can therefore flip $\rho_{G_0}$
along the $x$ and $y$ axes so that $a$ lands in $\{(x,y): x>0, y>0\}$ and thus
$a'$ lands in $\{(x,y): x<0, y>0\}$. By scaling,
we may assume the image of $\rho_{G_0}$ lies in $[-1,1]^2$. We may then draw
edges between $\rho_{G_0}(a)$ and $(0,1)$, $\rho_{G_0}(a')$ and $(-1,0)$,
$\rho_{G_0}(b)$ and $(0,-1)$, as well as $\rho_{G_0}(b')$ and $(1,0)$, none of
which intersect each other or any other point of $\rho_{G_0}(G_0)$.

These edges induce an embedding of $G$ into $\RR^2/2\ZZ^2$ by identifying
opposite edges of $[-1,1]^2$. We therefore have shown that $G$ is toroidal
in all cases.\end{proof}

One could imagine extending this to the case where $G/\alpha$ has genus $g$,
but that would depend on finding a sequence of points interchanged by $\alpha$
which sequentially lie on common faces. This fails however, as we see in the
following example where $G/\alpha$ has genus $2$.

\[
\xymatrix{
 & & \bullet \ar@{-}[r] \ar@{-}[rdddddd]& \bullet & & \\
 & \bullet \ar@{-}[ru]\ar@{-}[rd] & & & \bullet\ar@{-}[lu]\ar@{-}[ld] & \\
 & & \bullet\ar@{-}[r]\ar@{-}[rdd] & \bullet & & \\
\bullet \ar@{-}[ruu]\ar@{-}[rdd]\ar@{-}[rrrrr] & & & & & \bullet\ar@{-}[luu]\ar@{-}[ldd] \\
 & & \bullet\ar@{-}[r]\ar@{-}[ruu] & \bullet & & \\
 & \bullet\ar@{-}[ru]\ar@{-}[rd] & & & \bullet\ar@{-}[lu]\ar@{-}[ld] & \\
 & & \bullet\ar@{-}[r]\ar@{-}[ruuuuuu] & \bullet & &
}
\]

Nonetheless we note that this graph does indeed admit an embedding into a genus
two surface! In particular, we get slightly lucky in that the above method
shows how to embed this graph into the connected sum of two tori, albeit in a
way that does not obviously generalize.
Indeed, we do not know of an example of a graph with a mixing
involution $\alpha$ to a genus $g$ graph which does not already embed into a
genus $g$ orientable  surface.
Sometimes as well, this construction is not optimal because
different lifts of an edge need not cross: $K_5$ admits an essentially unique
involution whose quotient has genus 2, but is well-known to be toroidal.

We conclude by noting that although our criterion for being toroidal has
\emph{something} to do with gonality, there is more that goes into the
orientable genus than the gonality.

\begin{Lemma} There are trigonal graphs of all possible orientable genera.
Moreover, there are $d$-gonal graphs which are either planar or
of all possible orientable genera
$\ge (\frac{d}{2} -1)^2$ whenever $d\not \equiv  2\bmod 4$.
\end{Lemma}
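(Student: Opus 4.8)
The plan is to exhibit, for each relevant $d$, one planar $d$-gonal graph together with an explicit family of $d$-gonal graphs realizing every orientable genus $\geq \lceil (d/2-1)^2\rceil$, and then to read off the trigonal statement as the case $d=3$. Write $\gamma(G)$ for the orientable genus. The only outside input is the inequality $\mathrm{sgon}(G)\geq \mathrm{tw}(G)$ of de Bruyn and Gijswijt \cite{Treewidth}: given this, to see that a graph is $d$-gonal it suffices to bound its treewidth below by $d$ and to display a harmonic morphism of degree $d$ from some refinement of the graph to a tree, the latter forcing $\mathrm{sgon}\leq d$. The base family is the complete bipartite graph $K_{d,n}$ for $n\geq d$, with parts $X=\{x_1,\dots,x_d\}$ and $Y=\{y_1,\dots,y_n\}$. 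Contracting a matching of $d-1$ edges inside the subgraph $K_{d,d}\subseteq K_{d,n}$ produces a $K_{d+1}$ minor, so $\mathrm{tw}(K_{d,n})\geq d$; on the other hand, the morphism sending all of $X$ to the centre $c$ of a star with leaves $\ell_1,\dots,\ell_n$ and each $y_k$ to $\ell_k$ is harmonic of degree $d$ (local degree $1$ at each $x_i$, local degree $d$ at each $y_k$). Hence $\mathrm{sgon}(K_{d,n})=d$.

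By Ringel's determination of the genus of the complete bipartite graphs, $\gamma(K_{d,n})=\lceil (d-2)(n-2)/4\rceil$, and in particular $\gamma(K_{d,d})=\lceil (d-2)^2/4\rceil=\lceil (d/2-1)^2\rceil$. To catch the genera lying strictly between consecutive values $\gamma(K_{d,n-1})$ and $\gamma(K_{d,n})$ I would interpolate: for $1\leq j\leq d$, let $G_{n,j}$ be $K_{d,n-1}$ together with one further vertex $y_n$ joined to $x_1,\dots,x_j$ only. Then $G_{n,1}$ is $K_{d,n-1}$ with a leaf attached, so $\gamma(G_{n,1})=\gamma(K_{d,n-1})$; $G_{n,d}=K_{d,n}$; and $G_{n,j+1}$ is obtained from $G_{n,j}$ by adding one edge, so $\gamma(G_{n,j+1})\leq \gamma(G_{n,j})+1$. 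Listing the $G_{n,j}$ in lexicographic order of $(n,j)$ for $n\geq d+1$ yields a sequence of genera that starts at $\gamma(K_{d,d})$, tends to infinity, and changes by $0$ or $1$ at each step; hence it assumes every integer value $\geq \lceil (d/2-1)^2\rceil$ (equivalently, every integer $\geq (d/2-1)^2$). Each $G_{n,j}$ remains $d$-gonal: it contains $K_{d,d}$ since $n\geq d+1$, so $\mathrm{tw}(G_{n,j})\geq d$, and the star morphism above extends to a degree-$d$ harmonic morphism on the refinement $G_{n,j}'$ obtained by attaching, for each $i>j$, a pendant vertex at $x_i$ mapped to $\ell_n$ — which restores the local-degree condition at $x_{j+1},\dots,x_d$ — so $\mathrm{sgon}(G_{n,j})\leq \mathrm{gon}(G_{n,j}')\leq d$. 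Thus $G_{n,j}$ is a $d$-gonal graph of orientable genus $\gamma(G_{n,j})$, and as $(n,j)$ varies these realize all orientable genera $\geq (d/2-1)^2$.

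For a planar $d$-gonal graph I would take the $d\times m$ grid graph with $m\geq d$: it is planar, contains the $d\times d$ grid and hence has treewidth $\geq d$, and after subdividing each edge joining two vertices of the same column and attaching pendants exactly as above it carries a harmonic morphism of degree $d$ onto a caterpillar obtained from a path by adding leaves; so its stable gonality is $d$. Taking $d=3$ then gives the trigonal statement: $\gamma(K_{3,n})=\lceil (n-2)/4\rceil$ runs through every positive integer as $n$ ranges over $\{3,4,5,\dots\}$, while the $3\times 3$ grid is planar and trigonal, so trigonal graphs of every orientable genus exist. We keep the hypothesis $d\not\equiv 2\bmod 4$ to match the statement; the construction above makes no essential use of it.

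The step I expect to be the real work is checking harmonicity of these degree-$d$ maps after the edge deletions and pendant attachments — at every vertex the number of lifts of each edge at its image must be constant, and the global degree must stay equal to $d$ — the pendants at the unjoined $x_i$ (respectively at the spare vertices in each column of the grid) being exactly what keeps the fibre over the new leaf of size $d$. Given that, the genus bookkeeping (Ringel's formula, together with ``adding one edge raises the orientable genus by at most one'' and ``adding a leaf leaves it unchanged'') is routine, and the treewidth bounds are immediate from the $K_{d+1}$ minor, respectively the $d\times d$ subgrid. One further point to watch is that these graphs have stable gonality exactly $d$ rather than merely at least $d$, which is why the explicit upper-bound morphisms, and not a softer argument, are required.
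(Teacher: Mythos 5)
Your proposal is correct and follows the paper's skeleton — planar examples from the $d\times n$ grid, higher genera from $K_{d,n}$ via Ringel's formula $\gamma(K_{d,n})=\lceil (d-2)(n-2)/4\rceil$, with treewidth as the lower bound on gonality and the ``collapse the size-$d$ side to the centre of a star'' harmonic morphism as the upper bound — but it adds one genuinely new ingredient: the interpolating family $G_{n,j}$ obtained by attaching $y_n$ to only $x_1,\dots,x_j$. The paper simply asserts that $\lceil (d-2)(n-2)/4\rceil$ ``can be any integral value at least $(\frac{d}{2}-1)^2$'' when $d\not\equiv 2\bmod 4$, which is only true when the increment $(d-2)/4$ is at most $1$, i.e.\ for $d\le 6$; already for $d=7$ the sequence $7,8,9,10,12,\dots$ skips genus $11$. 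Your interpolation — adding a leaf preserves genus, adding one edge raises it by at most one, genus is monotone under subgraphs, so the nondecreasing sequence $\gamma(G_{n,1}),\dots,\gamma(G_{n,d})$ sweeps out every integer between $\gamma(K_{d,n-1})$ and $\gamma(K_{d,n})$ — closes exactly this gap, and your pendant-vertex refinement correctly restores harmonicity of the degree-$d$ map on each $G_{n,j}$ (the fibre over $\ell_n$ has total local degree $j+(d-j)=d$), while the $K_{d,d}$ subgraph keeps the treewidth lower bound. As you note, this also renders the hypothesis $d\not\equiv 2\bmod 4$ unnecessary, so your argument proves a slightly stronger statement than the paper does. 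The one place you are looser than you could be is the grid-graph upper bound, where your sketch of the caterpillar morphism is hand-wavy; but the paper itself only cites \cite[Example 3.3]{Treewidth} for that fact, so this is acceptable provided you lean on the reference rather than your sketch.
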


\begin{proof}
First we note that there are $d$-gonal planar graphs for all $d$ - simply take
$n\ge d$ and note that the $d\times n$ grid graph has gonality $d$
\cite[Example 3.3]{Treewidth}.

Then note that for $3\le d\le n$, the complete bipartite graph has orientable
genus $\left\lceil \frac{(d-2)(n-2)}{4}\right\rceil$. If $d$ is not $2\bmod 4$
 then this can be any integral value at least
$(\frac{d}{2} -1)^2$. On one hand, there is a clear degree $d$ harmonic
map from $K_{d,n}$ to a tree given by simply identifying the vertices in the
size $d$ subset. Therefore the gonality of $K_{d,n}$ is at most $d$. On the
other hand, the treewidth of $K_{d,n}$ is $d$, so this is a lower bound for
gonality \cite{Treewidth}, and we find that $K_{d,n}$ is $d$-gonal.
\end{proof}

The use of the complete bipartite graph above was suggested by Spencer Backman
and we thank him for the suggestion.
We conclude by noting that in the above examples, gonality, stable gonality, and
treewidth all coincide. It is conjectured for the hypercube graph $Q_n$ that
there is a gap between the two which increases along with $n$
\cite[\S 3]{Treewidth}.
In that case, the orientable genus is large and the conjectural least degree
map to a tree is given by successive quotients by involutions $Q_n \to Q_{n-1}$.
It would be interesting to find other infinite families of graphs with gaps
between gonality and treewidth and see if those also have large orientable
genus. It also still seems reasonable to wonder about
a connection between the orientable genus of a graph and the spectrum of its
Laplacian. After all, the spectrum of the $d\times n$ grid graph is
very limited \cite{Eigen}: the eigenvalues can only be
$$\lambda_{j,k} = 4\sin^2\left(\frac{j\pi}{2n}\right)
+ 4\sin^2\left(\frac{k\pi}{2d}\right).$$

In particular, the spectral lower bound on gonality \cite[Theorem C]{CKK} for
this example tends to 0 as $n\to\infty$.

\end{document}